\numberwithin{equation}{section}
\theoremstyle{plain}
\newtheorem{theorem}{Theorem}[section]
\newtheorem{lemma}[theorem]{Lemma}
\newtheorem{corollary}[theorem]{Corollary}
\theoremstyle{definition}
\newtheorem{definition}{Definition}[section]
\newtheorem{remark}[definition]{Remark}
\def\phi{\varphi}
\def\^#1{\mathaccent"705E #1}
\def\~#1{\mathaccent"707E #1}
\newcommand{\IZ}{\mathbbm{Z}}
\newcommand{\IR}{\mathbbm{R}}
\def\%#1{\mathcal{#1}}
\newcommand{\law}{\mathscr{L}}
\def\dw{\mathop{d_{\mathrm{W}}}}
\def\dk{\mathop{d_{\mathrm{K}}}}
\newcommand{\lito}{\mathrm{o}}
\newcommand{\hence}{\Rightarrow}
\newcommand{\nsig}{\Sigma\kern-0.5em\raise0.2ex\hbox to 0pt{$\mid$}\kern0.5em}
\newcommand{\tozero}{\to0}
\newcommand{\eps}{\varepsilon}
\newcommand{\I}{\mathrm{I}}
\newcommand{\ahalf}{{\textstyle\frac{1}{2}}}
\newcommand{\eq}{\eqref}
\newcommand{\IE}{\mathbbm{E}}
\newcommand{\IP}{\mathbbm{P}}
\newcommand{\Var}{\mathop{\mathrm{Var}}\nolimits}
\newcommand{\Exp}{\mathop{\mathrm{Exp}}}
\def\be#1\ee{\begin{equation*}#1\end{equation*}}
\def\ben#1\ee{\begin{equation}#1\end{equation}}
\def\bes#1\ee{\begin{equation*}\begin{split}#1\end{split}\end{equation*}}
\def\besn#1\ee{\begin{equation}\begin{split}#1\end{split}\end{equation}}
\def\bg#1\ee{\begin{gather*}#1\end{gather*}}
\def\bgn#1\ee{\begin{gather}#1\end{gather}}
\def\bm#1\ee{\begin{multline*}#1\end{multline*}}
\def\bmn#1\ee{\begin{multline}#1\end{multline}}
\def\ba#1\ee{\begin{align*}#1\end{align*}}
\def\ban#1\ee{\begin{align}#1\end{align}}
\def\klr#1{(#1)}
\def\bklr#1{\bigl(#1\bigr)}
\def\bbklr#1{\Bigl(#1\Bigr)}
\def\bbbklr#1{\biggl(#1\biggr)}
\def\klg#1{\{#1\}}
\def\bklg#1{\bigl\{#1\bigr\}}
\def\abs#1{\vert#1\vert}
\def\babs#1{\bigl\vert#1\bigr\vert}
\def\mid{\vert}
\def\bbbmid{\biggm\vert}
\def\^#1{\ifmmode {\mathaccent"705E #1} \else {\accent94 #1} \fi}
\def\~#1{\ifmmode {\mathaccent"707E #1} \else {\accent"7E #1} \fi}
\def\*#1{#1^\ast}
\edef\-#1{\noexpand\ifmmode {\noexpand\bar{#1}} \noexpand\else \-#1\noexpand\fi}
\def\>#1{\vec{#1}}
\def\.#1{\dot{#1}}
\def\leq{\leqslant}
\def\geq{\geqslant}
\def\atop{\@@atop}
\def\now{%
\minute=\time%
\hour=\time \divide \hour by 60%
\hourMins=\hour \multiply\hourMins by 60%
\advance\minute by -\hourMins%
\zeroPadTwo{\the\hour}:\zeroPadTwo{\the\minute}%
}
\def\zeroPadTwo#1{\ifnum #1<10 0\fi#1}
\renewcommand\section{\@startsection {section}{1}{\z@}%
{-3.5ex \@plus -1ex \@minus -.2ex}%
{1.3ex \@plus.2ex}%
{\center\small\sc\mathversion{bold}\MakeUppercase}}
\def\subsection#1{\@startsection {subsection}{2}{0pt}%
{-3.5ex \@plus -1ex \@minus -.2ex}%
{1ex \@plus.2ex}%
{\bf\mathversion{bold}}{#1}}
\def\subsubsection#1{\@startsection{subsubsection}{3}{0pt}%
{\medskipamount}%
{-10pt}%
{\normalsize\itshape}{\kern-2.2ex. #1.}}
\def\blfootnote{\xdef\@thefnmark{}\@footnotetext}
\def\cite{\citet*}
\begin{document}
\title{\bf Exponential approximation for the nearly critical Galton-Watson process and occupation times of Markov chains}
\author{%
Erol Pek\"oz\footnote{School of Management, Boston University, 595 Commonwealth Avenue, Boston, Massachusetts 02215; partially supported by NUS research grant R-155-000-098-133}%
\and
Adrian R\"ollin\footnote{Department of Statistics and Applied Probability, National University of Singapore, 6 Science Drive 2, Singapore 117546; partially supported by NUS research grant R-155-000-098-133}}
\date{}
\maketitle

\noindent\textbf{Abstract}\enskip In this article we provide new applications for exponential
approximation using the framework of \cite{Pekoz2009}, which is based on
Stein's method. We give error bounds for the nearly critical Galton-Watson
process conditioned on non-extinction, and for the occupation times of Markov
chains; for the latter, in particular, we give a new exponential approximation
rate for the number of revisits to the origin for general two dimensional random
walk, also known as the Erd\H{o}s-Taylor theorem.

\bigskip

\noindent\textbf{Keywords}\enskip Exponential distribution; Stein's method; nearly critical Galton-Watson branching process; occupation times of Markov chains; Erd\H{o}s-Taylor theorem

\bigskip

\noindent\textbf{AMS 2000 Subject Classification}\enskip Primary 60F05; Secondary 60J80, 60J10

\bigskip

\noindent Submitted to EJP on August 25, 2010; revised on March 26, 2011; final version accepted on June 30, 2011

\bigskip

\noindent 

\newpage

\section{Introduction}
A new framework for estimating the error of the exponential approximation
was recently developed in \cite{Pekoz2009}, where it was applied to geometric
sums, Markov chain hitting times, and the critical Galton-Watson conditioned on
non-extinction. In this article we provide some generalizations to the
approach of \cite{Pekoz2009} and apply them to study Markov chain occupation
times
and a result of \cite{Erdos1960} for the number of visits to the origin
by the two dimensional  random walk, as well as to get a rate for the
result of \cite{Fahady1971} for the nearly critical Galton-Watson branching
process conditioned on non-extinction.

The main result in \cite{Pekoz2009} that we use is based on Stein's method (see
e.g.
\cite{Ross2007} for an introduction) and can be thought of as formalizing the
intuitive notion that a random
variable $X$ has approximately an exponential distribution if $X$ and $X^e$ are
close in distribution, where $X^e$ has the equilibrium distribution with respect
to $X$ characterized by
\ben									\label{1b}
    \IP[X^e\leq x] = \frac{1}{\IE X}\int_0^x\IP[X>y]dy.
\ee
The equilibrium distribution appears in renewal theory as the time until the
next renewal starting from steady-state.   A renewal process with exponential
inter-renewal times has the exponential distribution for its equilibrium
distribution, and so the above intuition is not surprising.
\cite{Pekoz2009} give bounds on the accuracy of the exponential approximation in
terms of how closely $X$ and $X^e$ can be coupled together on the same
probability space; one version of the result we will use below can be written as
\be
  \sup_{x\geq 0} \babs{\IP[X> x]-e^{-x/\IE X}}\leq 2.46\sqrt{\IE |X-X^e|}.
\ee

Some heuristics for Stein's method can be understood using size-biased random
variables.  For a nonnegative continuous random variable $X$ with probability density
function $f(x)$, the size-biased random variable $X^s$ has  density $xf(x)/\IE
X$.  The size of the renewal interval containing a randomly chosen point as well
as the number of children in the family of a randomly chosen child are examples of
size-biased random variables; see \cite{Brown2006} and \cite{Arratia2010} for
surveys
and applications of size biasing.

Stein's method for the exponential distribution, as well as for some other
nonnegative
distributions, can be viewed in terms of size-biasing.  For the Poisson
approximation to some random variable $X$, the Stein-Chen method (see
\cite{Barbour1992}) gives a bound on the error in terms of how closely $X$ and
$X^s-1$ can be coupled together on the same probability space; these both have
exactly the same distribution when $X$ has a Poisson distribution. For
approximation by a binomial distribution (see \cite{Pekoz2009a}), we can obtain
a bound in terms of how closely $X^s-1$ and $n-(n-X)^s$ can be coupled; both of
these have exactly the same distribution if $X$ is binomial with parameters
$n$ and~$p$.  For the exponential distribution, we can obtain a bound on the
error
in terms of how closely $X$ and $UX^s$ can be coupled, where $U$ is an
independent uniform (0,1) random variable independent of all else;  $X^e$ has
the same distribution as~$UX^s$.  This last approach is the one we use below for
the nearly critical Galton Watson process conditioned on non-extinction.

The organization of this article is as follows. In Section 2 we give the
notation, background and preliminaries.  In Section 3 we consider the setting of
a nearly critical Galton Watson branching process conditioned on non-extinction.
In Section 4 we study general dependent sums, occupation times for Markov chains
and the the number of times the origin is revisited for the two-dimensional
general random walk.

\section{Preliminaries}

We first define the probability metrics we use below. For two
probability distributions $F$ and $G$ define the Kolmogorov metric as
\be
  \dk\klr{F,G} = \sup_{x\in\IR}\babs{F(x)-G(x)}.
\ee
If both distributions have finite expectation, define the Wasserstein
metric
\be
  \dw\klr{F,G} = \int_{\IR}\babs{F(x)-G(x)}dx.
\ee
We can relate the two metrics using
\be
    \dk(P,\Exp(1)) \leq 1.74\sqrt{\dw(P,\Exp(1))};
\ee
see e.g. \cite{Gibbs2002}.

Central to the approach in \cite{Pekoz2009} is the equilibrium distribution from
renewal theory, and we next give the definition we use.

\begin{definition}
Let $X$ be a non-negative random variable with finite mean. We say that a random
variable $X^e$ has the \emph{equilibrium distribution w.r.t.\ $X$} if for all
Lipschitz-continuous $f$
\ben 							    \label{1}
    \IE f(X) - f(0) =\IE X \, \IE f'(X^e).
\ee
\end{definition}
It is straightforward that this implies \eq{1b}. Indeed for nonnegative $X$ having finite first moment, define the distribution
function
\be
    F^e(x) = \frac{1}{\IE X}\int_0^x\IP[X>y]dy
\ee
on $x\geq 0$ and $F^e(x) = 0$ for $x<0$.
Then
\bes
    &\IE f(X)-f(0)
     = \IE \int_0^X f'(s) d s\\
    &\qquad = \IE \int_0^\infty f'(s) I[X>s] ds
     = \int_0^\infty f'(s) \IP[X>s] ds
\ee
so that $F^e$ is the distribution function of $X^e$ and our definition via
\eq{1} is consistent with that from renewal theory.

The size biased distribution will also be used below. We define it as follows.

\begin{definition}
Let $X$ be a non-negative random variable with finite mean. We say that a random
variable $X^s$ has the \emph{size-biased distribution w.r.t.\ $X$} if for all
bounded $f$
\ben                                                            \label{2}
    \IE\bklg{X f(X)}  =\IE X \, \IE f(X^s).
\ee
\end{definition}
It may be helpful in what follows to note that this definition using $f(x)=x^n$ immediately gives $\IE (X^s)^n = \IE X^{n+1}/\IE X$.
We next present the key result from \cite{Pekoz2009} that we will use in the
applications that follow. \begin{theorem}[\cite{Pekoz2009}, Theorem
2.1]\label{thm1} Let $W$ be a non-negative random variable with $\IE W= 1$ and
let $W^e$ have the equilibrium distribution w.r.t.\ $W.$ Then, for
any $\beta>0$,
\be
    \dk\bklr{\law(W),\Exp(1)}\leq 12\beta + 2\IP[\abs{W^e-W}>\beta],
\ee
and, if in addition $W$ has finite second moment,
\ben                                                            \label{3}
    \dw\bklr{\law(W),\Exp(1)}\leq 2\IE\abs{W^e-W}
\ee
\end{theorem}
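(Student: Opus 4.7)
The approach is Stein's method for the exponential distribution, built on the equilibrium characterization~\eqref{1}. First I set up the Stein equation: for a test function $h$ and $Z\sim\Exp(1)$, let
\be
 g'(w)-g(w)=h(w)-\IE h(Z),\qquad g(0)=0,
\ee
whose bounded solution is $g(w)=\e^w\int_0^w\e^{-t}\bklr{h(t)-\IE h(Z)}\,dt$. Evaluating at $W$ gives $\IE[h(W)-h(Z)]=\IE g'(W)-\IE g(W)$, and since $\IE W=1$ and $g(0)=0$, identity~\eqref{1} collapses to $\IE g(W)=\IE g'(W^e)$. Combining,
\be
 \IE[h(W)-h(Z)]=\IE\bklr{g'(W)-g'(W^e)}.
\ee
Everything now reduces to estimating this last expectation under a coupling of $W$ with~$W^e$.

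The Wasserstein bound~\eqref{3} is straightforward from here. For a $1$-Lipschitz test $h$, standard Stein-equation estimates give $\supnorm{g},\supnorm{g'}\leq 1$; differentiating the Stein equation yields $g''=g'+h'$, so $\supnorm{g''}\leq 2$. Kantorovich-Rubinstein duality then delivers
\be
 \dw\bklr{\law(W),\Exp(1)}\leq\supnorm{g''}\,\IE\abs{W-W^e}\leq 2\IE\abs{W-W^e}.
\ee

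The Kolmogorov bound is where the real work lies, and is the main obstacle. For $h_x=\Ione_{[0,x]}$, the Stein equation can be solved explicitly: $g(w)=\e^{-x}(\e^w-1)$ on $[0,x]$ and $g(w)=1-\e^{-x}$ on $(x,\infty)$, so $\supnorm{g'}\leq 1$ but $g'$ has a jump of size one at $x$ and is not Lipschitz---the Wasserstein argument therefore fails. I would handle this by splitting $\IE\bklr{g'(W)-g'(W^e)}$ according to whether $\abs{W-W^e}>\beta$ or not: on the large-deviation event, $\supnorm{g'}\leq 1$ produces the $2\IP[\abs{W^e-W}>\beta]$ term, while on $\klg{\abs{W-W^e}\leq\beta}$ I partition further by whether $W$ and $W^e$ sit on the same side of~$x$. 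If they both do, $g'$ is smooth on that side and $|g'(W)-g'(W^e)|\leq\abs{W-W^e}\leq\beta$; if they straddle $x$, both necessarily fall in the band $(x-\beta,x+\beta)$. The tool that closes this straddling case is that, by the very definition of the equilibrium distribution, $W^e$ admits Lebesgue density $\IP[W>t]/\IE W\leq 1$, hence $\IP[W^e\in(x-\beta,x+\beta)]\leq 2\beta$; multiplied by $\supnorm{g'}\leq 1$ this contributes $\bigo(\beta)$. Summing the three contributions yields a bound of the form $C\beta+2\IP[\abs{W^e-W}>\beta]$, and the stated constant $C=12$ follows from careful bookkeeping of the slack introduced at each step.
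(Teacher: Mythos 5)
The paper itself offers no proof of Theorem~\ref{thm1}: it is imported verbatim from Pek\"oz and R\"ollin's Theorem 2.1, so the only meaningful comparison is with that reference. Your argument is correct and follows the same Stein's-method skeleton --- solve $g'-g=h-\IE h(Z)$ with $g(0)=0$, use \eqref{1} to turn $\IE g(W)$ into $\IE g'(W^e)$, and bound $\IE\bklr{g'(W)-g'(W^e)}$ under a coupling. The Wasserstein half is identical to the original (via $\supnorm{g'}\leq 1$ and $g''=g'+h'$, hence $\supnorm{g''}\leq 2$; note only that $\supnorm{g}\leq 1$ is false for unbounded Lipschitz $h$, but you never use it). In the Kolmogorov half you genuinely diverge: the cited proof smooths the indicator over a window of width $\beta$ and pays for the resulting $1/\beta$ in the test function's Lipschitz constant, which is where the constant $12$ comes from, whereas you keep $h_x=\Ione_{[0,x]}$, solve the Stein equation explicitly, and control the straddling event through the fact that $W^e$ has density $\IP[W>t]/\IE W\leq 1$. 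Carried out, your decomposition yields $\beta$ from the same-side event (since $g'(w)=\e^{w-x}$ is $1$-Lipschitz on $[0,x]$ and $g'\equiv 0$ beyond $x$), at most $2\beta$ from the straddling event (via $\IP[W^e\in(x-\beta,x+\beta)]\leq 2\beta$ and $\supnorm{g'}\leq 1$), and $2\IP[\abs{W-W^e}>\beta]$ from the large-deviation event --- a constant of $3$ rather than $12$, so no ``careful bookkeeping'' is needed; your route gives a strictly stronger bound. The one step you should make explicit is the behaviour at the kink: $g'$ does not exist at $w=x$, so if $\IP[W=x]>0$ you must fix $g'(x)$ as the left derivative and check the Stein equation still holds there (it does, both sides equalling $\e^{-x}$), while $\IE g'(W^e)$ is insensitive to this choice because $W^e$ is absolutely continuous, and \eqref{1} legitimately applies to $g$ because $g$ is $1$-Lipschitz. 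With that addition the proof is complete.
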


\section{The nearly critical Galton-Watson branching process}

Consider the Galton-Watson branching process starting
from a single particle in generation  zero, where each particle has an independent and identically distributed number of children according to some distribution having mean~$m$; let $Z_{n}$ be the size of the $n$th
generation.
 For the critical case where $m=1$ and when $\IE Z_1^2 <\infty$ and $\IP (Z_1=0)>0$, it was shown by \cite{Yaglom1947} that the conditional distribution of $Z_n/n$
given $Z_n>0$ converges as $n\rightarrow \infty$ to an exponential distribution.  A
corresponding rate of convergence was first proved with the additional condition $\IE Z_1^3<\infty$ by \cite{Pekoz2009}. In the
super-
and sub-critical cases, respectively when $m>1$ and $m<1$, the limiting distributions are very difficult to
calculate and are only known explicitly in very special cases; see e.g.\
\cite{Bingham1988}. \cite{Fahady1971}, however, were able to show that the
limiting distribution of a
nearly critical branching process conditioned on non-extinction converges to the
exponential distribution as $m\to1$ over general classes of offspring distributions. The
following theorem gives explicit error bounds for the exponential approximation
for any finite $n$ and any $m\neq 1$. To avoid trivial cases, we make the general assumption that
\be
	0<\IP[Z_1=0]<1.
\ee

\begin{theorem}\label{thm2} Consider a Galton-Watson branching process starting
from a single particle at time zero, and let $Z_{n}$ be the size of the $n$th
generation. Assume $\IP[Z_1\geq2]>0$, $m=\IE Z_1 \neq 1$ and
$\IE Z_1^3<\infty$. Let $\alpha = \IP[Z_{1}=1]/\IP[Z_1\geq
2]$,
\ben									\label{4c}
  C=(2+\alpha)^2\bklr{1+\Var Z_1+\IE Z_1^3}^2
\ee
and $\lambda =1/\IE(Z_{n}|Z_{n}>0) = \IP[Z_n>0]/m^n$.  Then
\ben						\label{4}
    \dw\bklr{\law(\lambda Z_{n} | Z_{n} >0),\Exp(1)} \leq C\eta(m,n),
\ee
where
\ben							\label{4b}
  \eta(m,n) = \frac{1-m}{1-m^n}
    +\frac{(1-m)^2}{m(1-m^n)}\sum_{j=1}^{n-1}\frac{m^{2j}}{1-m^{j}}.
\ee
\end{theorem}

It seems difficult to directly deduce rates of convergence from~\eq{4b}. The following estimates are more useful (a proof is given in the Appendix).

\begin{lemma}\label{lem3} For any $n\geq 1$ and $m>1$,
\ben								\label{5}
   \eta(m,n)\leq 2(m-1) + \frac{3+\log(n)}{n},
\ee
and, for any $n\geq 2$ and $\ahalf \leq m < 1$,
\ben								\label{6}
  \eta(m,n)\leq \bklr{4-2\log(1-m)}(1-m) +  \frac{4+2\log(n)}{n}.
\ee
\end{lemma}

\cite{Fahady1971} considered classes $K(a,b)$, $0<a<\infty$, $0<b<\infty$, of offspring distributions such that for all $\law(Z_1)\in K(a,b)$,
\be
	(A) \enskip \IE Z_1^3 \leq a,\qquad
	(B) \enskip \IE Z_1(Z_1-1)\geq b,
\ee
and showed that within each such class the limiting distribution of the conditioned Galton-Watson branching process converges to the exponential as $m\to1$. While retaining $(A)$, it is not too difficult to see that Condition $(B)$ is equivalent to
\be
	(B') \enskip \IP[Z_1\geq 2]\geq b'
\ee
for some $b'>0$ (it is easy to see that $(B')$ implies $(B)$---a proof of the reverse is given in the Appendix). Hence, it is clear that under these assumptions, the constant~$C$ in \eq{4c} will remain bounded as $m\to 1$ and hence Theorem~\ref{thm2} and Lemma~\ref{lem3} give explicit bounds under the conditions of \cite{Fahady1971}. Thanks to our explicit bounds, we can furthermore weaken the assumptions on the offspring distributions in the sense that the third moment of $Z_1$ may grow and $\IP[Z_1\geq 2]\tozero$ as long as $C=\lito\bklr{\frac{1}{m-1}}$ if $m\searrow1$, respectively, $C=\lito\bklr{\frac{-1}{(1-m)\log(1-m)}}$ if $m\nearrow1$.

\begin{proof}[Proof of Theorem~\ref{thm2}] With some modifications, we follow
the line of argument from \cite{Pekoz2009}, which is based on the
size-biased branching tree of \cite{Lyons1995}.

We
assume that the particles in the tree are labeled and ordered. That is, if $w$
and $v$
are two particles in the same generation, then all offspring of $w$ are to
the left of the offspring of $v$, whenever $w$ is to the left of $v$.
We start in
generation $0$ with one particle $v_0$ and let it have a size-biased number of
offspring. Then we pick one of the
offspring of $v_0$ uniformly at random and label it $v_1$. For each of the
siblings (the other offspring from the same parent) of $v_1$ we continue with an independent Galton-Watson branching
process with the original offspring distribution. For $v_1$ we proceed as
we did for $v_0$, i.e., we give it a size-biased number of offspring, pick one
uniformly at random, label it $v_2$, and so on.

Denote by $S_n$ the total number of particles in
generation $n$. Denote by $L_n$ and $R_n$, respectively, the number of particles
to the left (exclusive $v_n$) and to the right (inclusive $v_n$), respectively,
of~$v_n$. Denote by $S_{n,j}$ the number of particles in generation $n$
that stem from any
of the siblings of $v_j$ (but not $v_j$ itself). Likewise, let
$L_{n,j}$ and $R_{n,j}$, respectively, be the number of particles in generation
$n$ that stem from the siblings to the left and right, respectively, of $v_j$.
We have the relations $L_n =
\sum_{j=1}^n L_{n,j}$ and $R_n = 1 + \sum_{j=1}^n R_{n,j}$.

Next let $R_{n,j}'$ be independent random variables such that
\be
    \law(R'_{n,j}) = \law(R_{n,j} | L_{n,j} = 0),
\ee
and, with $A_{n,j} = \{ L_{n,j}=0\}$, define
\be
    R_{n,j}^* = R_{n,j} I_{A_{n,j}} + R_{n,j}' I_{A_{n,j}^c}
    = R_{n,j} + (R_{n,j}' - R_{n,j}) I_{A_{n,j}^c}.
\ee
Define also $R_n^* = 1 + \sum_{j=1}^n R_{n,j}^*$. Below are a few
facts that we will subsequently use to give the proof of the theorem. In what
follows, let $\sigma^2 = \Var Z_1$ and $\gamma=\IE Z_1^3$.
\ba
    (i)&\enskip\rlap{\text{The size-biased distribution of $\law(X)$ is the same
as that of $\law(X | X>0)$;}}\\
    (ii) &\enskip \rlap{\text{$S_n$ has the size-biased distribution of
        $\law(Z_n)$;}}\\
    (iii) &\enskip \rlap{\text{$v_n$ is uniformly distributed
among the particles of generation~$n$;}}\\
    (iv) &\enskip \law(R_n^*) = \law(Z_n | Z_n > 0);
    &(v) &\enskip \IE \klg{R_{n,j}'I_{A_{n,j}^c}}\leq m^{n-j}\sigma^2
\IP[A_{n,j}^c];\\
    (vi) &\enskip \text{$\IE\klg{R_{n,j} I_{A_{n,j}^c}} \leq m^{n-j}\gamma
            \IP[A_{n,j}^c]$;}
    & (vii) &\enskip  \text{ $\IP[A_{n,j}^c]\leq
m^{-1}\sigma^2\IP[Z_{n-j}>0]$.}\\
    (viii)&\enskip {\IP[Z_n>0] \leq (2+\alpha)\frac{m^n(1-m)}{1-m^n}}
\ee
For $(i)$-$(iv)$ see \cite{Pekoz2009}. Using independence,
\be
    \IE \klg{R'_{n,j} I_{A_{n,j}^c}} = \IE R'_{n,j} \IP[A_{n,j}^c]  \leq
    \IE S_{n,j} \IP[A_{n,j}^c]   \leq m^{n-j}\sigma^2 \IP[A_{n,j}^c],
\ee
which proves $(v)$. If $X_j$ denotes the number of
siblings of $v_j$, having the size-biased distribution of $\law(Z_1)$ minus $1$,
we have
\bes
    \IE\klg{R_{n,j} I_{A_{n,j}^c}}
    &\leq m^{n-j}\IE\klg{X_j I_{A_{n,j}^c}}
    \leq m^{n-j}\sum_{k}k\IP[X_j=k,A_{n,j}^c]  \\
    &\leq m^{n-j}\sum_{k}k\IP[X_j=k]\IP[A_{n,j}^c|X_j=k]\\
    &\leq m^{n-j}\sum_{k}k^2\IP[X_j=k]\IP[A_{n,j}^c]\\
    &\leq m^{n-j}\IE X_i^2 \IP[A_{n,j}^c]
    \leq  m^{n-j}\gamma\IP[A_{n,j}^c],
\ee
hence $(vi)$. Now,
\be
    \IP[A_{n,j}^c] = \IE\klg{\IP[A_{n,j}^c|X_j]}
    \leq \IE\klg{X_j\IP[Z_{n-j}>0]} =  m^{-1}\sigma^2\IP[Z_{n-j}>0],
\ee
which proves $(vii)$. Finally, using the Corollary on page 356 of
\cite{Fujimagari1980}, we have
\be
    \IP[Z_n>0] \leq \left(2+\alpha\right) \frac{1-m}{m^{-n}-1},
\ee
which is $(viii)$ (note that the result cited is for bounded offspring
distribution, but easily extends to the unbounded case).

Set $W = \lambda R_n^*$, and note that, due to $(iv)$, $\law(W)=\law(Z_n|Z_n>0)$.
Due to $(i)$ and $(ii)$, $S_n$ has the size-biased distribution with respect to
$R_n^*$. Let $U$ be a uniform random variable on $[0,1]$, independent of all else. Note that, if $Y$ is a random variable, uniformly distributed on the integers $\{1,\dots,n\}$, then $Y-U$ is continuous and uniformly distributed on $[0,n]$. Observing that, given $S_n$, $R_n$ has uniform distribution on $\{1,\dots,S_n\}$ because of $(iii)$, we therefore deduce that $R_n-U$ has uniform distribution on $[0,S_n]$. Hence, $\law(R_n-U)=\law(U S_n)$, which implies that we can set $W^e =\lambda(R_n -
U) $. Applying~\eq{3} and using $(v)$--$(vii)$, we obtain
\bes
\IE\abs{R_n^* - R_n} & \leq \sum_{j=1}^n \IE\klg{R_{n,j}'I_{A_{n,j}^c} +
            R_{n,j}I_{A_{n,j}^c}}\\
     &\leq       \sum_{j=1}^n
    	m^{n-j}\bklr{\sigma^2+\gamma}\IP[A_{n,j}^c] \\
    & \leq \sigma^2+\gamma+ \sum_{j=1}^{n-1}
    	\frac{m^{n-j}\bklr{\sigma^2+\gamma}\sigma^2\IP[Z_{n-j}>0]}{m}\\
        & \leq
    	\sigma^2+\gamma+
\sum_{j=1}^{n-1}
	\frac{m^{n-j}\bklr{\sigma^2+\gamma}\sigma^2(2+\alpha)m^{n-j}(1-m)}{m(1-m^{n-j})}\\
 & \leq
    	\sigma^2+\gamma+\frac{1-m}{m}(2+\alpha)\bklr{\sigma^2+\gamma}\sigma^2 \sum_{j=1}^{n-1} \frac{m^{2j}}{1-m^{j}},
\ee
and, using
\be
	\lambda = \frac{\IP[Z_n>0]}{m^n} \leq \frac{(2+\alpha)(1-m)}{1-m^n},
\ee
we obtain
\be
  \IE\abs{W-W^e}
    \leq
     \lambda/2 + \lambda\IE\abs{R_n^* - R_n} \leq C \eta(m,n),
\ee
which proves~\eq{4}.
\end{proof}

\section{Visits to the origin for a two dimensional simple random walk}

Exponential approximation results for sums of nonnegative random variables $X_1,
X_2,\dots,X_n$ satisfying the condition $\Var(\IE(X_i|X_1,\ldots X_{i-1}))=0$
for all~$i$ were given in \cite[Theorem 3.1]{Pekoz2009}, but not for more
general dependent sums. Here we give a construction of the equilibrium distribution for sums of arbitrarily
dependent nonnegative random variables having finite means, apply it to occupation times for Markov
chains and then illustrate it by getting a new exponential approximation rate
for the
number of times a general irreducible aperiodic two-dimensional integer-valued random walk revisits the origin.

\begin{theorem} \label{thm3}Let $W=\lambda \sum_{i=1}^n X_i$ where $X_1,
X_2,\dots, X_n$ are
(possibly dependent) nonnegative random variables and let $\lambda=
1/\IE\sum_{i=1}^n X_i$.  Suppose, for each $i$ and each $x$, $W_i(x)$ is a random variable such that
\be
  \law(W_i(x)) = \law\bbbklr{\lambda \sum_{m=1}^{i-1} X_m \bbbmid X_{i} =x}.
\ee
For each $i$, let $X_i^s$ be a random variable having the size-biased distribution of~$X_i$. Let
$I$ be independent of all else with $\IP [I=i] = \lambda\IE X_{i}$ and let
$U$ be a uniform random variable on $(0,1)$, independent of all else. Then 
  $W_I(X_I^s) + \lambda U X_I^s$
has the equilibrium distribution with respect to $W$.
In particular, if $X_i \in \{0,1\}$ for all $i$, we have $X^s_i=1$ and hence
$W_I(1)+\lambda U$ has the equilibrium distribution with respect to $W$.
\end{theorem}

\begin{proof} Let $S_m= \lambda\sum_{i=1}^m X_i$. By first conditioning on
$I$ and $U$, and using \eq{2} and $\law(S_i) = \law(W_i(X_i))$ for the third equality, we obtain
\bes
   &\IE  f'\bklr{W_I(X_I^s) + \lambda U X_I^s}\\
   &\qquad=\sum_{i=1}^n\lambda\IE X_i \int_0^1\IE\bklg{f'(W_i(X_i^s)
	+\lambda u X_i^s)}du\\
 &\qquad=\sum_{i=1}^n\lambda\int_0^1\IE \bklg{X_i (f'(W_i(X_i)
	+\lambda u X_i))}du\\
   &\qquad=\sum_{i=1}^n\IE \bklg{ f(W_i(X_i)
      +\lambda X_i)-f(W_i(X_i))} \\
   &\qquad = \sum_{i=1}^n \IE \bklg{f( S_{i})-f(S_{i-1} )} = \IE f(W) - f(0).
  \qedhere
\ee
\end{proof}

\begin{remark}\label{rem}
The argument goes through in the same way when instead we define
\be
  \law(W_i(x)) = \law\bbbklr{\lambda \sum_{m=i+1}^{n} X_m \bbbmid X_{i} =x}.
\ee
\end{remark}

We next apply the above result to Markov chain occupation times.  Our next
result gives a bound on the error of the exponential approximation for the
number of times
a Markov chain revisits its starting state. More general asymptotic results of
this type, but without explicit bounds on the error, go back to
\cite{Darling1957}.

\begin{corollary}\label{col}
Consider a Markov chain started at time zero in a state 0 and let $X_i$ be the
indicator for the event that the Markov chain is in state 0 at time $i$. Let
$W_m=\lambda \sum_{i=1}^m X_i$   and
$\lambda = 1/\IE [\sum_{i=1}^n X_i]$.  Then writing $W\equiv W_n$ we have
\be
  \dw\bklr{\law(W),\Exp(1)}
   \leq 2\lambda + 2\lambda^2 \sum_{i=1}^n \sum_{j=n-i+1}^n \IE X_i\, \IE X_j
\ee
\end{corollary}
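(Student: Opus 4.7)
The plan is to apply Remark~\ref{rem} (the variant of Theorem~\ref{thm3} in which $W_i(x)$ encodes the future rather than the past) together with equation~\eq{3} of Theorem~\ref{thm1}. The first observation is that, since $X_i\in\klg{0,1}$, the defining relation~\eq{2} gives $\IE\klg{X_i f(X_i)} = f(1)\IP[X_i=1] = \IE X_i\cdot f(1)$, so $X_i^s = 1$ almost surely. Hence Theorem~\ref{thm3} via Remark~\ref{rem} produces an equilibrium variable of the form $W^e = W_I(1) + \lambda U$, where conditionally on $I=i$ the piece $W_i(1)$ has law $\law\bklr{\lambda\sum_{m=i+1}^n X_m\bmid X_i=1}$.

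The decisive observation is that, because the chain starts at state~$0$, the strong Markov property at the deterministic time $i$ restricted to the event $\{X_i=1\}$ implies that conditional on $X_i=1$ the continuation $(X_{i+1},\ldots,X_n)$ is distributed exactly as a fresh chain of length $n-i$ started at~$0$. Therefore $\law(W_i(1)) = \law\bklr{\lambda\sum_{m=1}^{n-i} X_m}$, and on the right-hand side we may simply reuse the original chain. This motivates the coupling: on a single probability space carrying $(X_1,\ldots,X_n)$, an independent index $I$ with $\IP[I=i] = \lambda \IE X_i$, and an independent uniform $U$, set
\be
  W = \lambda\sum_{m=1}^n X_m,\qquad W^e = \lambda\sum_{m=1}^{n-I} X_m + \lambda U.
\ee
By the previous paragraph the marginal law of $W^e$ agrees with that of the canonical equilibrium variable of Theorem~\ref{thm3}, so $W^e$ does have the equilibrium distribution with respect to $W$.

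With this coupling the difference collapses to
\be
  W - W^e = \lambda\sum_{m=n-I+1}^n X_m - \lambda U,
\ee
so that $\abs{W-W^e}\leq \lambda\sum_{m=n-I+1}^n X_m + \lambda$. Taking expectations, using the independence of $I$ from $(X_1,\ldots,X_n)$ together with $\IP[I=i] = \lambda\IE X_i$, yields
\be
  \IE\abs{W-W^e} \;\leq\; \lambda + \lambda^2\sum_{i=1}^n\sum_{j=n-i+1}^n \IE X_i\,\IE X_j,
\ee
and the claimed estimate follows from the Wasserstein inequality $\dw(\law(W),\Exp(1))\leq 2\IE\abs{W-W^e}$ of~\eq{3}.

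The only real obstacle is to justify that this ``lazy'' coupling — reusing the initial segment $(X_m)_{m=1}^{n-I}$ of the original chain to play the role of $W_I(1)$ — yields the correct marginal law for $W^e$. This is entirely an artifact of the strong Markov property at visits to the starting state and would fail for a generic dependent sequence $(X_i)$.
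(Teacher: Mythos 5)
Your proposal is correct and follows essentially the same route as the paper: apply Remark~\ref{rem} with $X_i^s=1$, use the (strong) Markov property at returns to state $0$ to identify $\law(W_i(1))=\law(\lambda\sum_{m=1}^{n-i}X_m)$, couple $W^e=\lambda\sum_{m=1}^{n-I}X_m+\lambda U$ on the same space as $W$, and bound $\IE\abs{W-W^e}$ before invoking~\eq{3}. Your explicit justification that only the marginal law of $W^e$ matters for the equilibrium property (so the ``lazy'' coupling is legitimate) is a point the paper leaves implicit.
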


\begin{proof}
Using the notation of the previous Theorem \ref{thm3} and Remark \ref{rem}, in
this
setting the
strong Markov property gives
\be
  \law(W_i)
      = \law\bbbklr{\lambda \sum_{m=i+1}^{n} X_m \bbbmid X_{i} =1}
      =\law\bbbklr{\lambda  \sum_{m=1}^{n-i} X_m }
\ee
and so we can let
\be
  W^e = \lambda \sum_{i=1}^{n-I} X_i + \lambda U,
\ee
where $\IP[I=i]=\lambda \IE X_i$, and conditioning on $I$ gives
\be
  \IE |W-W^e|
    \leq \lambda + \lambda^2 \sum_{i=1}^n \sum_{j=n-i+1}^n \IE X_i \IE X_j
\ee
and then~\eq{3} gives the result.
\end{proof}

We next consider a general aperiodic irreducible random walk on the two-dimensional integer lattice
started at the origin.  As a consequence of \cite[p.~24]{Lawler2010} we have the
following lemma.
\begin{lemma}\label{lem2} Let $Z_n$ be
an irreducible and aperiodic random walk on $\IZ^2$ with mean zero and finite
third moment. Then there are positive constants $c_1$ and $c_2$ such that
\be
  \frac{c_1}{n} \leq \IP[Z_n = 0] \leq \frac{c_2}{n}
\ee
for sufficiently large~$n$.
\end{lemma}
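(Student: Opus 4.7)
My plan is to derive the two-sided bound from the classical local central limit theorem on $\IZ^2$, which I would prove by Fourier inversion. Set $\phi(\theta)=\IE e^{i\theta\cdot Z_1}$ for $\theta\in[-\pi,\pi]^2$; since $Z_n$ is a sum of i.i.d.\ $\IZ^2$-valued steps,
\be
  \IP[Z_n=0]=\frac{1}{(2\pi)^2}\int_{[-\pi,\pi]^2}\phi(\theta)^n\,d\theta.
\ee
The hypotheses $\IE Z_1=0$ and $\IE\abs{Z_1}^3<\infty$ give the Taylor expansion $\phi(\theta)=1-\tfrac12\theta^T\Sigma\theta+\bigo(\abs{\theta}^3)$, where $\Sigma$ is the covariance matrix of $Z_1$ and is positive definite because the walk is genuinely two-dimensional (a consequence of irreducibility). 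Irreducibility and aperiodicity further force $\abs{\phi(\theta)}<1$ on $[-\pi,\pi]^2\setminus\klg{0}$.

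I would then split the integral at some small radius $\delta>0$. On the complement $\klg{\abs{\theta}\geq\delta}$, continuity and compactness give some $\rho=\rho(\delta)<1$ with $\abs{\phi(\theta)}\leq\rho$, so this piece contributes at most $(2\pi)^2\rho^n=\lito(1/n)$. On the disc $\klg{\abs{\theta}\leq\delta}$ I would substitute $\theta=u/\sqrt{n}$, use the Taylor bound to dominate $\babs{\phi(u/\sqrt{n})}^n$ uniformly in $n$ by the integrable function $\exp\bklr{-\tfrac14 u^T\Sigma u}$, and apply dominated convergence to obtain
\be
  n\,\IP[Z_n=0]\;\longto\;\frac{1}{(2\pi)^2}\int_{\IR^2}e^{-\tfrac12 u^T\Sigma u}\,du
  \;=\;\frac{1}{2\pi\sqrt{\det\Sigma}}\;\in\;(0,\infty).
\ee
Since the limit is strictly positive and finite, for all sufficiently large $n$ we may take any $c_1$ and $c_2$ bracketing this limit to obtain the claimed inequalities.

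The main technical obstacle is arranging the uniform-in-$n$ Gaussian domination of $\babs{\phi(u/\sqrt{n})}^n$ on the small disc; this comes down to choosing $\delta>0$ small enough that
\be
  \log\abs{\phi(\theta)}\leq -\tfrac14\theta^T\Sigma\theta
  \qquad\text{for } \abs{\theta}\leq\delta,
\ee
which is a direct consequence of the cubic Taylor remainder, and the finite third moment is precisely what makes that remainder uniform enough for the estimate. Once this domination is in place, both bounds follow at once from the nondegeneracy of $\Sigma$.
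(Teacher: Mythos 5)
Your proposal is correct. The paper does not actually prove this lemma --- it simply cites it as a consequence of the local central limit theorem in Lawler and Limic (2010, p.~24) --- and your Fourier-inversion argument is precisely the standard proof of that cited fact: expand the characteristic function to second order with a cubic remainder (here is where the finite third moment enters), use irreducibility to get nondegeneracy of $\Sigma$ and irreducibility plus aperiodicity to get $\abs{\phi(\theta)}<1$ away from the origin, split the integral, and conclude $n\,\IP[Z_n=0]\to(2\pi\sqrt{\det\Sigma})^{-1}\in(0,\infty)$, which yields both constants. The only step you assert rather than prove --- that aperiodicity and irreducibility force $\abs{\phi(\theta)}<1$ on $[-\pi,\pi]^2\setminus\{0\}$ --- is standard (if $\abs{\phi(\theta_0)}=1$ then $\theta_0\cdot Z_1$ is a.s.\ constant mod $2\pi$, which forces either periodicity or confinement to a proper sublattice), so the argument is complete for the purposes of this lemma.
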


We are now able to give a bound on the error of the exponential approximation
for the number
of times the random walk revisits the origin.  This type of result, for simple
random walk, goes back to
\cite{Erdos1960}.

\begin{corollary} Let $Z_n$ be
an irreducible and aperiodic random walk on $\IZ^2$ with mean zero and finite
third moment.
Let $R$ be the number of return visits to the origin by time $n$, and let
$W=\lambda R$, where $\lambda =1/\IE R$. Then, there is constant $C$
independent of $n$ such that
\be
  \dw(\law(W), \law(\Exp(1)) \leq \frac{C}{\log n}.
\ee
for all $n$.
\end{corollary}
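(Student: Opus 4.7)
The plan is to apply Corollary~\ref{col} with $X_i = \mathbbm{1}\{Z_i = 0\}$, so that $R = \sum_{i=1}^n X_i$ is precisely the number of returns to the origin up to time~$n$. This gives directly
\be
  \dw\bklr{\law(W),\Exp(1)} \leq 2\lambda + 2\lambda^2 \sum_{i=1}^n \sum_{j=n-i+1}^n \IP[Z_i=0]\,\IP[Z_j=0],
\ee
and reduces everything to showing $\lambda = \bigo(1/\log n)$ and that the double sum is $\bigo(1)$.

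For $\lambda$, I would use Lemma~\ref{lem2}. The lower bound $\IP[Z_i=0]\geq c_1/i$ for $i$ large enough yields $\IE R = \sum_{i=1}^n \IP[Z_i=0] \geq c_1 \log n - \bigo(1)$, so $\lambda \leq C_1/\log n$ for large~$n$. The upper bound, combined with the trivial $\IP[Z_i=0]\leq 1$ at the finitely many small indices, upgrades to a uniform estimate $\IP[Z_i=0]\leq C_2/i$ valid for every $i\geq 1$.

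For the double sum, I would substitute $\IP[Z_i=0]\IP[Z_j=0]\leq C_2^2/(ij)$ and show
\be
  S_n := \sum_{i=1}^n \sum_{j=n-i+1}^n \frac{1}{ij} = \bigo(1).
\ee
Integral comparison with the rescaling $i=nu$, $j=nv$ exhibits $S_n$ as a Riemann sum for $\iint_{\{u+v>1\}\cap (0,1]^2} du\,dv/(uv)$, which is finite because the integration region is bounded away from the singularity of $1/(uv)$ at the origin. (One can even evaluate it to $\pi^2/6$ using $\int_0^1 -\log(1-u)/u\,du = \pi^2/6$, but only boundedness is needed.)

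Combining gives $\dw\bklr{\law(W),\Exp(1)} \leq 2\lambda + \bigo(\lambda^2) = \bigo(1/\log n)$, with the $\lambda$ term dominant. The only point that really needs attention is the uniform-in-$n$ bound on $S_n$: the triangular constraint $i+j>n$ is essential, since without it the sum grows like $\log^2 n$ and the whole argument would only yield a constant bound. Verifying this cleanly is the main---and only mild---obstacle.
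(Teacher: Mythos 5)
Your proposal is correct and follows essentially the same route as the paper: apply Corollary~\ref{col} with $X_i$ the indicator of $\{Z_i=0\}$, then use Lemma~\ref{lem2} to get $\lambda\leq C/\log n$ and to control the double sum, so that the $2\lambda$ term dominates. The only (immaterial) difference is that you bound $\sum_{i=1}^n\sum_{j=n-i+1}^n(ij)^{-1}$ by an absolute constant via the dilogarithm integral, whereas the paper settles for the cruder bound $C\log n$ on the double sum, which already suffices since it is multiplied by $\lambda^2=\mathrm{O}((\log n)^{-2})$.
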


\begin{proof}
Let $X_n = \I_{\{Z_n=0\}}$ be the indicator for the event that the random walk
revisits the origin at time $n$.  Lemma~\ref{lem2} gives $\lambda \leq C/\log
n$ and
thus the result follows from Corollary~\ref{col} and, where $C$ may be different
(but independent of $n$) in each instance used,
\be
  \lambda^2 \sum_{i=1}^n \sum_{j=n-i+1}^n \IE X_i \IE X_j
    \leq \frac{C}{(\log n)^2}\sum_{i=1}^n \frac{i}{i(n-i)}
    \leq \frac{C}{\log n}.
\ee
\end{proof}

\begin{remark}
The result for the two-dimensional simple random walk
\be
  \sup_{a<x<b}|\IP[W> x]-e^{-x}| \leq \frac{C\log \log n}{\log n}
\ee
for fixed $a$ and $b$ follows from \cite[Eq.~(3.10)]{Erdos1960}, so the
above corollary can be viewed as a complement and extension. Using the method
of moments, \cite[Theorem~1.1]{Gartner2009} give an argument for the analogous
exponential limit theorem for general random walks, but without a rate of
convergence.
\end{remark}

\section*{Acknowledgments}
The authors are grateful to Andrew Barbour and Rongfeng Sun for stimulating
discussions and for the suggestions (respectively) to study the nearly critical
branching process and the Erd\H{o}s-Taylor theorem. We also thank the anonymous referee for helpful comments.

\begin{appendix}

\section{Proof of Lemma~3.2}
We first need some simple estimates.

\begin{lemma}\label{lem1}
Let $a$, $b$ and $c$ be real numbers, strictly greater than $1$, such that
\ben								\label{9}
  \frac{1}{a} \leq \frac{1}{b} + \frac{1}{c} \leq 1.
\ee
Then
\be
  \frac{\log(a)}{a} \leq \frac{1+\log(b)}{b}+\frac{1+\log(c)}{c}.
\ee
\end{lemma}

\begin{proof}It is clear from the monotonicity of the logarithm function that
for $x,y>0$ we have
\be
   x\log(x)+y\log(y)  \leq (x+y)\log(x+y).
\ee
Hence,
\be
  (x+y)(1-\log(x+y)) \leq x(1-\log(x))+y(1-\log(y)).
\ee
Rewriting this inequality for $x=1/b$ and $y=1/c$, we have
\be
  \frac{1+\log\bklr{\frac{bc}{b+c}}}{\frac{bc}{b+c}}
      \leq\frac{1+\log(b)}{b}+\frac{1+\log(c)}{c}.
\ee
Noting that $\frac{1+\log(a)}{a}$ is a decreasing function for $a\geq 1$ and
noting that $a\geq \frac{bc}{b+c}\geq 1$ from~\eq{9},
\be
  \frac{\log(a)}{a} \leq \frac{1+\log(a)}{a}
  \leq \frac{1+\log\bklr{\frac{bc}{b+c}}}{\frac{bc}{b+c}},
\ee
which proves the claim.
\end{proof}

Let $f$ be a non-negative function on $[a,b]$ for two integers $a$ and $b$. If $f$ is either increasing, decreasing or has exactly one minimum, a simple geometric argument yields that
\ben				\label{6b}
	\sum_{j=a}^{b-1} f(j) \leq f(a)+\int_{a}^b f(x)dx
\ee
(this estimate is not optimal if the function is increasing, but we want to avoid further case distinctions). 

\begin{proof}[Proof of Lemma~\ref{lem3} for $m>1$]
It is straightforward to see that $f(x)=m^{2x}/(m^x-1)$, $x>0$, has exactly one minimum at $x_0=\log(2)/\log(m)$, hence $f(x)$ is decreasing on $0<x\leq x_0$ and increasing on $x\geq x_0$. Using \eq{6b}, we therefore  have
\bes
  \sum_{j=1}^{n-1}\frac{m^{2j}}{m^{j}-1}
  &\leq \frac{m^2}{m-1}
    +\frac{m^{n}-m+\log\bklr{\frac{m^{n}-1}{m-1}}}{\log(m)},
\ee
which implies that
\bes
  \eta(m,n)
    & \leq \frac{m-1}{m^n-1}+\frac{m(m-1)}{m^n-1}
    + \frac{(m-1)^2\bklr{m^{n}-m}}{m(m^n-1)\log(m)}\\
    &\quad + \frac{(m-1)^2 \log\bklr{\frac{m^{n}-1}{m-1}}}{m(m^n-1)\log(m)}
    =: r_1 + r_2 + r_3 + r_4.
\ee
Recall that
\ben							\label{7}
  \frac{m^n -1}{m-1} =\sum_{k=0}^{n-1}m^k\geq n.
\ee
This implies that
\be
  r_1 \leq \frac{1}{n}, \qquad
  r_2  \leq \frac{m}{n} = \frac{m-1}{n}+\frac{1}{n}
    \leq m-1+\frac{1}{n}.
\ee
Furthermore, recalling that $m-1\leq m\log(m)$,
\be
  r_3 \leq \frac{(m-1)\bklr{m^{n}-m}}{m^n-1} \leq m-1.
\ee
Finally,
\bes
  r_4 = \frac{(m-1)^2\log\bklr{\frac{m^{n}-1}{m-1}}}{m(m^n-1)\log(m)}
      \leq \frac{m-1}{m^n-1}\log\bbklr{\frac{m^{n}-1}{m-1}}
      \leq\frac{1+\log(n)}{n}.
\ee
The last estimate is due to the fact that $\log(x)/x$ is
clearly bounded by $(1+\log(x))/x$ for $x>1$, and the latter
is a decreasing function, and then by applying~\eq{7}. Putting the estimates
for $r_1$ through $r_4$ together
proves~\eq{5}.
\end{proof}

\begin{proof}[Proof of Lemma~\ref{lem3} for $m<1$]
Note first that $\frac{y^2}{1-y}$ is increasing on
$0<x<1$, hence $f(x)=m^{2x}/(1-m^x)$ is a decreasing function in $x$. Applying \eq{6b},
\be
  \sum_{j=1}^{n-1}\frac{m^{2j}}{1-m^{j}}
  \leq \frac{m^2}{1-m} 
    +\frac{m-m^{n}+\log\bklr{\frac{1-m}{1-m^{n}}}}{\log(m)},
\ee
which implies that
\bes
  \eta(m,n)
    & \leq \frac{1-m}{1-m^n}+\frac{m(1-m)}{1-m^n}
    + \frac{(1-m)^2\bklr{m-m^{n}}}{m(1-m^n)\log(m)}\\
    &\quad + \frac{(1-m)^2{\log\bklr{\frac{1-m}{1-m^{n}}}}}{m(1-m^n)\log(m)}
    =: r_1 + r_2 + r_3 + r_4.
\ee
As $\frac{1-m^n}{1-m}=\sum_{k=0}^{n-1} m^k\geq m^n n$, we have
\besn							\label{8}
  &\frac{1-m}{m^n}+\frac{1-m}{1-m^n} \leq \frac{1-m}{m^n}+\frac{1}{m^n n} 
  \quad\hence\quad \frac{1-m}{1-m^n}\leq 1-m+\frac{1}{n}.
\ee
Hence
\be
	 r_1+r_2\leq 2(1-m)+\frac{2}{n}.
\ee
It is easy to see that $r_3\leq 0$. Using now that $1-m\leq -2m\log(m)$ for $\ahalf \leq m < 1$,
\be
  r_4
  = \frac{(1-m)^2\log\bklr{\frac{1-m}{1-m^{n}}}}{m(1-m^n)\log(m)}
  \leq \frac{2\log\bklr{\frac{1-m^n}{1-m}}}{\frac{1-m^n}{1-m}}.
\ee
Under the restriction $\ahalf \leq m<1$ and $n\geq 2$, we can now apply
Lemma~\ref{lem1} below for $a=\frac{1-m^n}{1-m}$,
$b=\frac{1}{1-m}$ and $c=n$ due to~\eq{8} and we obtain
\be
  r_4
    \leq 2\bbbklr{\frac{1+\log\bklr{\frac{1}{1-m}}}{\frac{1}{1-m}}
	+\frac{1+\log(n)}{n}}.
\ee
Putting all the estimates together proves~\eq{6}.
\end{proof}

\section{Proof that Condition $(B)$ implies $(B')$}

We will show that $\neg(B')$ implies $\neg(B)$. More specific, under the Condition~$(A)$, we show that $\IP[X\geq 2]\tozero$ implies $\IE X(X-1)\tozero$.

To do this we need to find a vector of probabilities $p_0, p_1, \ldots p_n$ that maximizes
\be
	\sum_{k=0}^n k(k-1)p_k,
\ee
subject to the constraints
\be
	\sum_{k=0}^n k^3 p_k \leq a, \qquad
	\sum_{k=2}^n p_k \leq \eps, \qquad
	p_k\geq 0,\enskip k=0,1,\ldots n, \qquad
	\sum_{k=0}^n p_k = 1.
\ee
 This is a linear programming problem with $n+1$ variables and $n+4$ constraints.  The constraints define a simplex and the fundamental theorem of linear programming tells us the maximum is achieved at a corner point of the simplex where there are $n+1$ binding constraints; this means at most three of the variables $p_k$ can be non-zero at the maximum. As we assume $p_0>0$, we have therefore reduced the problem to just looking at three-point distributions, where one of the three points is at 0.

We consider first the case where neither points are at 1, where we are now trying to find $x,y$ and $p,q$ that maximizes
\be
	x(x-1)p+y(y-1)q
\ee
subject to the constraints
\be
	x^3p+y^3q \leq a, \qquad
	p+q \leq \eps, \qquad
	p,q\geq 0,
	\qquad x,y\geq 2.
\ee
Since the first constraint gives $x\leq (a/p)^{1/3}$ and $y\leq (a/q)^{1/3}$ we have
\be
	x(x-1)p+y(y-1)q \leq a^{2/3}p^{1/3}+a^{2/3}q^{1/3}\leq 2a^{2/3}\eps^{1/3}\rightarrow 0
\ee
as $\eps \rightarrow 0$ and we get our intended result. The case of a point at $y=1$ works out the same way.

\end{appendix}
	
\setlength{\bibsep}{0.5ex}
\def\bibfont{\small}

\end{document}